\newtheorem{Theorem}{Theorem}[section]
\newtheorem{Definition}{Definition}[section]
\newtheorem{Example}{Example}[section]
\newtheorem{Lemma}[Theorem]{Lemma}
\newtheorem{Corollary}[Theorem]{Corollary}
\renewcommand{\Pr}{{\ensuremath{\mathrm{Pr}}}}
\newcommand{\xx}{{\ensuremath{\mathbf{x}}}} % s-tuple
\newcommand{\yy}{{\ensuremath{\mathbf{y}}}} % s-tuple
\newcommand{\zz}{{\ensuremath{\mathbf{z}}}} % s-tuple
\newcommand{\uu}{{\ensuremath{\mathbf{u}}}} % t-tuple
\newcommand{\vv}{{\ensuremath{\mathbf{v}}}} % (s-t)-tuple
\newcommand{\rr}{{\ensuremath{\mathbf{r}}}} % (s-t)-tuple
\newcommand{\ww}{{\ensuremath{\mathbf{w}}}} % (s-t)-tuple
\newcommand{\dist}{{\ensuremath{\mathsf{d}}}} % (s-t)-tuple
\title{On Security Properties of All-or-nothing Transforms}
\author{Navid Nasr Esfahani}
\author{Douglas R.\ Stinson\thanks{D.R.\ Stinson's research is supported by  NSERC discovery grant RGPIN-03882.
}}
\affil{David R.\ Cheriton School of Computer Science, University of Waterloo,
Waterloo, Ontario, N2L 3G1, Canada\\ {\tt dstinson@uwaterloo.ca}}
\date{}
\begin{document}
	
\maketitle

\begin{abstract}
All-or-nothing transforms have been defined as bijective mappings on all $s$-tuples over a specified finite alphabet. These mappings are required to  satisfy certain ``perfect security'' conditions specified using entropies of the probability distribution defined on the input $s$-tuples. Alternatively, purely combinatorial definitions of AONTs have been given, which involve certain kinds of ``unbiased arrays''. However, the combinatorial definition makes no reference to probability definitions. 

In this paper, we examine the security provided by AONTs that satisfy the combinatorial definition. The security of the AONT can depend on the underlying probability distribution of the $s$-tuples. We show that perfect security is obtained from an AONT if and only if the input $s$-tuples are equiprobable.  However, in the case where  the input $s$-tuples are not equiprobable, we still achieve a weaker security guarantee. We also consider the use of randomized AONTs to provide perfect security for a smaller number of inputs, even when those inputs are not equiprobable. 
\end{abstract}

\textbf{Keywords:} all-or-nothing transform, perfect security

\section{Introduction}

All-or-nothing-transforms (AONTs) were invented in 1997 by Rivest \cite{R}. Several variations of AONTs have received considerable attention since then. Some early papers include
\cite{Boyko,CDHKS,Desai}. In this paper, we focus on unconditionally secure AONTs, which were introduced by Stinson \cite{St} 
and later generalized in \cite{DES,EGS}.
Further work focussing on the existence of  unconditionally secure AONTs can be found in \cite{ES,WCJ,ZZWG}.

AONTs were originally suggested by Rivest \cite{R} as a mode of operation for block ciphers that would slow down exhaustive key searches. There have since been numerous suggested applications of AONTs in security and cryptography. We do not survey these applications here; however, a variety of applications are discussed and reviewed in \cite{Phd}.

We begin with the informal definition of an unconditionally secure all-or-nothing-transform that was given in 
\cite{St}.

\begin{Definition}
\label{def1}
Suppose $\phi : \Gamma^s \rightarrow \Gamma^s$, where $\Gamma$ is a finite set of size $v$ (called an \emph{alphabet}) and $s$ is a positive integer. Thus $\phi$ is a function that maps an input $s$-tuple 
$\mathbf{x} = (x_1, \dots  , x_s)$ to
output $s$-tuple 
$\mathbf{y} = (y_1, \dots  , y_s)$. 
The function $\phi$ is an \emph{$(s,v)$-all-or-nothing transform} (or $(s,v)$-AONT) provided that the following properties are
satisfied:
\begin{enumerate}
\item  $\phi$ is a bijection.
\item  If any $s - 1$ of the $s$ outputs $y_1, \dots , y_s$ are fixed, then the value of any one input  $x_i$ (for $1 \leq i \leq s$) is completely undetermined.
\end{enumerate}
\end{Definition}

Definition \ref{def1} does not place any bound on the computational capabilities of an adversary. 
In contrast, Rivest's original definition from \cite{R} only requires that the computation of any one input value, given $s-1$ output values, is infeasible. 

Definition \ref{def1} was generalized to that of a $t$-all-or-nothing-transform in \cite{DES}.
\begin{Definition}
\label{deft}
Suppose $1 \leq t \leq s$. A function $\phi : \Gamma^s \rightarrow \Gamma^s$ is a \emph{$(t,s,v)$-all-or-nothing transform} (or $(t,s,v)$-AONT) provided that the following properties are
satisfied:
\begin{enumerate}
\item  $\phi$ is a bijection.
\item  If any $s - t$ of the $s$ outputs $y_1, \dots , y_s$ are fixed, then the values of any $t$ inputs $x_i$ (for $1 \leq i \leq s$) are completely undetermined.
\end{enumerate}
\end{Definition}
We note that Definition \ref{def1} is  just the special case of Definition \ref{deft} that arises by setting  $t=1$. 

Definition \ref{deft} was ``rephrased'' in terms of the entropy function in \cite{DES}, as follows. (Earlier, an analogous definition was given in \cite{St} for the special case $t=1$.)
We will refer to Definition \ref{defentropy} as the \emph{entropy definition}.

\begin{Definition}
	\label{defentropy}
	Let 
	\[\mathbf{X_1}, \dots , \mathbf{X_s}, \mathbf{Y_1}, \dots , \mathbf{Y_s}\] be random variables taking on
	values in the finite set $\Sigma$ of size $v$.  
	These $2s$ random variables 
	define a \emph{$(t, s, v)$-AONT} provided that
	the following conditions are satisfied:
	\begin{enumerate}
		\item $\mathsf{H}( \mathbf{Y_1}, \dots , \mathbf{Y_s} \mid \mathbf{X_1}, \dots , \mathbf{X_s}) = 0$.
		\item $\mathsf{H}( \mathbf{X_1}, \dots , \mathbf{X_s} \mid \mathbf{Y_1}, \dots , \mathbf{Y_s}) = 0$.
		\item For all $\mathcal{X} \subseteq \{\mathbf{X_1}, \dots , \mathbf{X_s}\}$ with 
		$|\mathcal{X}|  = t$, and for all $\mathcal{Y} \subseteq \{\mathbf{Y_1}, \dots , \mathbf{Y_s}\}$ with 
		$|\mathcal{Y}|  = s-t$, it holds that 
		\begin{equation}
			\label{t-AONT_PS.eq}
			\mathsf{H}( \mathcal{X}  \mid  \mathcal{Y} ) = \mathsf{H}(\mathcal{X}).
		\end{equation}
	\end{enumerate}
\end{Definition}
Suppose we consider $\mathbf{X_1}, \dots , \mathbf{X_s}$ as inputs and $\mathbf{Y_1}, \dots , \mathbf{Y_s}$ as outputs. 
Then properties 1  and 2  ensure that we can define a bijection between the $s$ inputs and the $s$ outputs. Property 3  is saying that no information about any $t$ inputs can be derived from any $s-t$ outputs. 

Finally, a combinatorial definition of all-or-nothing definitions was proposed in \cite{St} in the case $t=1$ and in \cite{DES} for arbitrary $t$. First, we require some preliminary definitions. An \emph{$(N,k,v)$-array} is an $N$ by $k$ array, say $A$, whose entries are elements chosen from an alphabet $\Gamma$ of order $v$.  
Suppose the columns of $A$ are
labeled by the elements in the set $C$.  
Let $D \subseteq C$, and define $A_D$ to be the array obtained from $A$
by deleting all the columns $c \notin D$.
We say that $A$ is
\emph{unbiased} with respect to $D$ if the rows of
$A_D$ contain every $|D|$-tuple of elements of $\Gamma$ 
exactly $N / v^{|D|}$ times.

We have already stated that a $(t,s,v)$-AONT, say $\phi$,  is a bijection from $\Gamma$ to $\Gamma$, where $\Gamma$ is a $v$-set.
The \emph{array representation} of $\phi$ is a  $(v^s,2s,v)$-array, say $A$, that is constructed as follows.
%Suppose the columns of $A$ are indexed by $x_1, \dots , x_s, y_1, \dots y_s$.
For every input $s$-tuple $(x_1, \dots , x_s) \in \Gamma^s$, there is a row of $A$ containing the entries $x_1, \dots , x_s, y_1, \dots , y_s$, 
where $\phi(x_1, \dots , x_s) = (y_1, \dots , x_y)$. 

Definition \ref{defunbiased} defines $(t,s,v)$-AONT in terms of
arrays that are unbiased with respect to
certain subsets of columns. 
We refer to this definition as the \emph{combinatorial definition}.

\begin{Definition}
\label{defunbiased}
A \emph{$(t,s,v)$-all-or-nothing transform} is a $(v^s,2s,v)$-array, say $A$, with columns 
labeled $1, \dots , 2s$, 
that is unbiased with respect to the following subsets of columns:
\begin{enumerate}
\item $\{1, \dots , s\}$,
\item $\{s+1, \dots , 2s\}$, and
\item $I \cup J$, 
         for all $I \subseteq \{1,\dots , s\}$ with $|I| = t$ and all 
         $J \subseteq \{s+1,\dots , 2s\}$ with $|J| = s-t$.
\end{enumerate}
\end{Definition}

We interpret the first $s$ columns of $A$ as indexing the $s$ inputs and the last $s$ columns as indexing the $s$ outputs.
Then, as mentioned above, properties 1 and 2 ensure that the array $A$ defines a bijection $\phi$. Property 3 
says that knowledge of any $s-t$ outputs does not rule out any possible values for any $t$ inputs.

\subsection{Our Contributions}

Our goal in this paper is to better understand the definitions of AONTs given above and analyze the differences between them. 
The entropy definition (Definition \ref{defentropy}) involves the ``security'' of an AONT, while the combinatorial definition (Definition \ref{defunbiased}) is just defining a certain mathematical structure. An analysis of the security properties of AONTs will, in general, depend on the underlying probability distribution on the possible inputs. This dependence has not been discussed in prior work.

It turns out to be illuminating to also consider a security definition that is not a stringent 
as Definition \ref{defentropy}. We call this \emph{weak security} (see Definition \ref{Def:tAONT_WS}), in contrast to the security afforded in Definition \ref{defentropy}, which we call \emph{perfect security}.

Our two main results are 
\begin{enumerate}
\item Any AONT satisfying Definition \ref{defunbiased} (the combinatorial definition) is guaranteed to provide weak security.
\item An AONT satisfying Definition \ref{defunbiased} provides perfect security if and only if the underlying probability distribution on the input $s$-tuples is uniform.
\end{enumerate}

We also show that we can obtain perfect security for $t$ inputs, for an arbitrary probability distribution, by using a randomized AONT.

%%%%%%%%%%%%%%%%%%%%%%%%%%%%%%%%%%%%%%%%%%%%%%%%%%%%%%%%%%%%%

\section{Perfect and Weak Security of AONTs}

In the rest of this paper, we  assume that every input $s$-tuple occurs with non-zero probability.  Since an AONT is a bijection, it follows immediately that every output $s$-tuple also occurs with non-zero probability. 

If Definition \ref{defentropy} is satisfied, then %In the stronger security interpretation 
the probability that $t$ inputs take on any $t$ specified values, given the values of any $s-t$ outputs, is the same as the \textit{a priori} probability that they take on the same values. We call this \emph{perfect security}. We will prove in Theorem \ref{Thrm:unbiased_PS} that Condition \ref{t-AONT_PS.eq}  of Definition \ref{defentropy} can be satisfied if the input $s$-tuples all occur with uniform probability.  

We also consider a notion that we call \emph{weak security}, 
where we require that any $t$ inputs can take on any  $t$ specified values with non-zero probability, given the values of any $s-t$ outputs. More formally, we have the following entropy definition for a weakly secure AONT.

\begin{Definition}
	\label{Def:tAONT_WS}
	Let 
	\[\mathbf{X_1}, \dots , \mathbf{X_s}, \mathbf{Y_1}, \dots , \mathbf{Y_s}\] be random variables taking on
	values in the finite set $\Sigma$ of size $v$.  
	These $2s$ random variables 
	define a \emph{weakly secure $(t, s, v)$-AONT} provided that
	the following conditions are satisfied:
	\begin{enumerate}
		\item $\mathsf{H}( \mathbf{Y_1}, \dots , \mathbf{Y_s} \mid \mathbf{X_1}, \dots , \mathbf{X_s}) = 0$.
		\item $\mathsf{H}( \mathbf{X_1}, \dots , \mathbf{X_s} \mid \mathbf{Y_1}, \dots , \mathbf{Y_s}) = 0$.
		\item \label{t-AONT_WS.eq} Given the values of any $s-t$ outputs, any $t$ inputs take on any possible values with a non-zero probability.
	\end{enumerate}
\end{Definition}

It is immediate that a perfectly secure AONT is also weakly secure. We illustrate the concepts of perfect and weak security in the following examples.

\begin{table}[t]
		\begin{center}
			\caption{A $(1,2,2)$-AONT over the alphabet $\{a,b,c\}$}
			\label{Tab:122AONT}
			\vspace{.25in}
			\begin{tabular}{|c|c||c|c|} \hline
				$x_1$ & $x_2$ & $y_1$ & $y_2$ \\ \hline
				a & a & a & a \\
				a & b & c & b \\
				a & c & b & c \\
				b & a & b & b \\
				b & b & a & c \\
				b & c & c & a \\
				c & a & c & c \\
				c & b & b & a \\
				c & c & a & b \\\hline				
			\end{tabular}
		\end{center}
	\end{table}

\begin{Example}
	\label{2_AONT_Example}
{\rm	
	Table \ref{Tab:122AONT} presents the array representation of a $(1,2,2)$-AONT, over the alphabet $\Gamma = \{a,b,c\}$. In the rows of this array, we are just listing the outputs $y_1$ and $y_2$ corresponding to  all possible values of the input elements $x_1$ and $x_2$. 
	
	Suppose that all nine input pairs are equally probable, and 
	%, i.e., $\Pr [x_1,x_2] = 1/9$ for all $(x_1,x_2) \in \Gamma \times \Gamma$. 
	suppose an adversary learns that $y_2 = a$. Then each possible value of $x_1$ occurs with the same probability. That is, 
	\[ \Pr [\mathbf{X_1} = a \mid \mathbf{Y_2} = a] = \Pr [\mathbf{X_1} = b \mid \mathbf{Y_2} = a] = \Pr [\mathbf{X_1} = c \mid \mathbf{Y_2} = a] = 
	\frac{1}{3}.\]
	Since
	\[ \Pr [\mathbf{X_1} = a ] = \Pr [\mathbf{X_1} = b ] = \Pr [\mathbf{X_1} = c ] = 
	\frac{1}{3},\]
	we have 
	\[ \Pr [\mathbf{X_1} = a \mid \mathbf{Y_2} = a] = \Pr [\mathbf{X_1} = a ],\]
	etc.
	Using similar calculations, it  can be verified that this AONT provides perfect security for an equiprobable input distribution. \hfill$\blacksquare$
	
	}

\end{Example}

	%This knowledge allows them to exclude six possible inputs, e.g., $(a,b), (a,c),$ and $(b,a)$; however, all values for any input element are still possible. 	
\begin{Example}
	\label{2_AONT_Example-2}
{\rm		
	Now we consider a nonuniform input distribution for the AONT presented in Table \ref{Tab:122AONT}. Suppose that the inputs $x_1$ and $x_2$ are independent, and they occur with the following probabilities:

	\[
	\begin{array}{l@{\quad\quad}l@{\quad\quad}l}
	\Pr [\mathbf{X_1} = a ] = 1/3 & \Pr [\mathbf{X_1} = b ] = 1/3 & 
	\Pr [\mathbf{X_1} = c] = 1/3 \\
	\Pr [\mathbf{X_2} = a ] = 1/2 & \Pr [ \mathbf{X_2} = b] = 1/4 & 
	\Pr [\mathbf{X_2} = c] = 1/4 .
	\end{array}
	\]
	%This immediately yields the following distribution on the outputs:
	%\[
	%\begin{array}{l@{\quad\quad}l@{\quad\quad}l}
	%\Pr [y_1 = a , y_2 = a] = 1/6 & \Pr [y_1 = c , y_2 = b] = 1/12 & 
	%\Pr [y_1 = b , y_2 = c] = 1/12 \\
	%\Pr [y_1 = b , y_2 = b] = 1/6 & \Pr [y_1 = a , y_2 = c] = 1/12 & 
	%\Pr [y_1 = c , y_2 = a] = 1/12 \\
	%\Pr [y_1 = c , y_2 = c] = 1/6 & \Pr [y_1 = b , y_2 = a] = 1/12 & 
	%\Pr [y_1 = a , y_2 = b] = 1/12.
	%\end{array}
	%\]
	Again, suppose an adversary learns that $y_2 = a$. 
	We  compute the conditional probability distribution on $x_1$, given that $y_2 = a$. 
	First we note that
	\begin{eqnarray*}
	 \Pr [\mathbf{Y_2} = a] &=& \Pr [\mathbf{X_1} = a, \mathbf{X_2} = a] + \Pr [\mathbf{X_1} = b, \mathbf{X_2} = c] 
	               + \Pr [\mathbf{X_1} = c, \mathbf{X_2} = b] \\
	 &=& \frac{1}{3} \times \frac{1}{2} + \frac{1}{3} \times \frac{1}{4} 
	 + \frac{1}{3} \times \frac{1}{4} \\
	 & = & \frac{1}{6}  + \frac{1}{12} 
	 + \frac{1}{12} \\ &=&
	 \frac{1}{3}.
	\end{eqnarray*}
	Now, we have
	 \begin{eqnarray*}
	 \Pr [\mathbf{X_1} = a \mid \mathbf{Y_2} = a] & = & \frac{\Pr [\mathbf{X_1} = a , \mathbf{Y_2} = a]}{\Pr [\mathbf{Y_2} = a]} \\
	 & = & \frac{\Pr [\mathbf{X_1} = a , \mathbf{X_2} = a]}{\Pr [\mathbf{Y_2} = a]} \\
	 & = & \frac{1/6}{1/3}\\
	 & = & \frac{1}{2}.
	 \end{eqnarray*}
	 Similar calculations yield 
	 \begin{eqnarray*}
	 \Pr [\mathbf{X_1} = b \mid \mathbf{Y_2} = a] & = &  \frac{1}{4}
	 \end{eqnarray*}
	 and
	 \begin{eqnarray*}
	 \Pr [\mathbf{X_1} = c \mid \mathbf{Y_2} = a] & = &  \frac{1}{4}.
	 \end{eqnarray*}
	 Thus, when $y_2 = a$,  the \emph{a posteriori} distribution on $x_1$ is different from  the \emph{a priori} distribution on $x_1$. This is sufficient to show that the AONT does not provide perfect security.	
	 
	 \smallskip
	 
	 It is interesting to repeat these calculations, considering the distributions on $x_2$ instead of $x_1$. We obtain
	 \begin{eqnarray*}
	 \Pr [\mathbf{X_2} = a \mid \mathbf{Y_2} = a] &=& \frac{1}{2}\\
	 \Pr [\mathbf{X_2} = b \mid \mathbf{Y_2} = a] &=& \frac{1}{4}\\
	 \Pr [\mathbf{X_2} = c \mid \mathbf{Y_2} = a] &=& \frac{1}{4}.	 
	 \end{eqnarray*}
	 Thus, when $y_2 = a$,  the \emph{a posteriori} distribution on $x_2$ is identical to  the \emph{a priori} distribution on $x_1$. A similar result holds when $y_2 = a$, and when $y_1 = b$ or $y_1 =  a$.
	 \hfill$\blacksquare$	
	}
\end{Example}

\begin{Theorem}
	\label{equiv}
	A weakly secure $(t,s,v)$-AONT is equivalent to a $(v^s,2s,v)$-array
	that is unbiased with respect to the following subsets of columns:
	\begin{enumerate}
		\item $\{1, \dots , s\}$,
		\item $\{s+1, \dots , 2s\}$, and
		\item $I \cup J$, 
		for all $I \subseteq \{1,\dots , s\}$ with $|I| = t$ and all 
		$J \subseteq \{s+1,\dots , 2s\}$ with $|J| = s-t$.
	\end{enumerate}
\end{Theorem}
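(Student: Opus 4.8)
The plan is to prove both directions of the equivalence by passing through the bijection underlying the AONT, the single genuine idea being that the apparently weak ``non-zero probability'' requirement of Definition \ref{Def:tAONT_WS} is in fact equivalent to full unbiasedness, thanks to a tight row count. First I would extract the bijection. Given a weakly secure $(t,s,v)$-AONT, Conditions 1 and 2 of Definition \ref{Def:tAONT_WS} say that $(\mathbf{Y_1},\dots,\mathbf{Y_s})$ is a deterministic function of $(\mathbf{X_1},\dots,\mathbf{X_s})$ and conversely; since every input $s$-tuple occurs with positive probability, this function is defined on all of $\Sigma^s$, is injective, and (as $|\Sigma^s|=v^s$ is finite) is a bijection $\phi\colon\Sigma^s\to\Sigma^s$ whose array representation $A$ is a $(v^s,2s,v)$-array. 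Conversely, from a $(v^s,2s,v)$-array $A$ that is unbiased with respect to $\{1,\dots,s\}$ and with respect to $\{s+1,\dots,2s\}$ --- each with index $N/v^{s}=1$ since $N=v^s$ --- the first $s$ columns list every input $s$-tuple exactly once, so the rows of $A$ form the graph of a function $\phi$, and unbiasedness on the last $s$ columns makes $\phi$ a bijection; equipping the inputs with any distribution of full support then yields random variables satisfying Conditions 1 and 2. In both directions we are looking at the same object: a bijection $\phi$ together with its array $A$.

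The core step is to show that, for a fixed bijection $\phi$ with array $A$ and any full-support input distribution, Condition 3 of Definition \ref{Def:tAONT_WS} holds if and only if $A$ is unbiased with respect to $I\cup J$ for every $I\subseteq\{1,\dots,s\}$ with $|I|=t$ and every $J\subseteq\{s+1,\dots,2s\}$ with $|J|=s-t$. Fix such $I$ and $J$. Since $|I\cup J|=s$, the relevant unbiasedness index is again $N/v^{|I\cup J|}=v^s/v^s=1$, and each of the $v^s$ rows of $A$ realizes exactly one of the $v^s$ possible $s$-tuples of values on the columns $I\cup J$. Condition 3 states that for every assignment $\mathbf{a}$ to the inputs indexed by $I$ and every assignment $\mathbf{b}$ to the outputs indexed by $J$, the conditional probability of ``those inputs equal $\mathbf{a}$'' given ``those outputs equal $\mathbf{b}$'' is non-zero; as every output tuple also has positive probability, this is equivalent to the joint event having positive probability, which in turn (every row of $A$ carrying positive probability) is equivalent to the pair $(\mathbf{a},\mathbf{b})$ being realized by at least one row of $A$ on the columns $I\cup J$. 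By the count above, ``at least one row for each of the $v^s$ pairs'' is equivalent to ``exactly one row for each pair,'' i.e.\ to unbiasedness with respect to $I\cup J$; since every link in this chain is an equivalence, it runs both ways.

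Combining these proves the theorem: a weakly secure AONT yields, through $\phi$, an array with all three unbiasedness properties, and conversely such an array becomes a weakly secure AONT under any full-support input distribution. The step I expect to be the main obstacle --- indeed the only one that needs an idea rather than bookkeeping --- is the equivalence just sketched: weak security only asks for positive probability, which looks strictly weaker than the exact-count condition defining unbiasedness, and the resolution is that the rigidity of the array (exactly $v^s$ rows, each hitting exactly one of the $v^s$ admissible value patterns on $I\cup J$) forces ``positive everywhere'' to coincide with ``perfectly balanced.'' I would also be slightly careful with the word ``equivalent'': the correspondence sends a weakly secure AONT to its array representation and is onto the arrays described but is many-to-one, and indeed the proof shows that weak security does not depend on which full-support input distribution one chooses.
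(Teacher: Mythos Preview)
Your proposal is correct and follows essentially the same approach as the paper: pass to the array representation of the bijection and argue that Condition~3 of Definition~\ref{Def:tAONT_WS} is equivalent to unbiasedness on each $I\cup J$. In fact your write-up is more thorough than the paper's, which simply asserts this last equivalence; you supply the counting argument (since $|I\cup J|=s$, there are $v^s$ rows and $v^s$ possible patterns, so ``each pattern appears at least once'' forces ``each appears exactly once'') that the paper leaves implicit.
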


\begin{proof}
	Let $A$ be the hypothesized $(v^s,2s,v)$-array on alphabet $\Gamma$, $|\Gamma| = v$. We construct $\phi : \Gamma^s \rightarrow \Gamma^s$ as follows: for each row $(x_1, \dots , x_{2s})$ of $A$, define
	\[ \phi (x_1, \dots , x_{s}) = (x_{s+1}, \dots , x_{2s}).\]
	Being unbiased with respect to the first two subsets of columns indicates that $\phi$ is a bijection, and being unbiased with respect to the third subset of columns is equivalent to condition 3 of Theorem \ref{Def:tAONT_WS}. Hence, the function $\phi$ is a weakly secure $(t,s,v)$-AONT. 
	
	Conversely, suppose $\phi$ is a weakly secure $(t,s,v)$-AONT.
	Let $A$ be the array representation of the AONT. % formed by listing all the inputs and outputs as the rows. The rows of this array consist of all $v^s$ $2s$-tuples $(x_1, \dots , x_{2s})$, where
	%$\phi (x_1, \dots , x_{s}) = (x_{s+1}, \dots , x_{2s})$.
	Then $A$ is the desired $(v^s,2s,v)$-array.
\end{proof}

%%%%%%%%%%%%%%%%%%%%%%%%%%%%%%%%%%%%%%%%%%%%%%%%%%%%%%%%%%%%%

Now we analyze perfect security. We make use of the following well-known fact.

\begin{Theorem}
\label{conditional}
Let $\mathcal{X}$ and $\mathcal{Y}$  be random variables. Then
$\mathsf{H}( \mathcal{X}  \mid  \mathcal{Y} ) = \mathsf{H}(\mathcal{X})$ if and only if
$\mathcal{X}$ and $\mathcal{Y}$ are independent. 
\end{Theorem}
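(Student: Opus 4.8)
The plan is to prove the equivalence by computing the mutual information $\mathsf{I}(\mathcal{X};\mathcal{Y}) := \mathsf{H}(\mathcal{X}) - \mathsf{H}(\mathcal{X}\mid\mathcal{Y})$, showing it is always non-negative, and identifying exactly when it vanishes. First I would expand both entropies from their definitions, with all sums restricted to the supports so that no $\log 0$ appears: using $\mathsf{H}(\mathcal{X}\mid\mathcal{Y}) = -\sum_{x,y}\Pr[\mathcal{X}=x,\mathcal{Y}=y]\log\Pr[\mathcal{X}=x\mid\mathcal{Y}=y]$ together with $\mathsf{H}(\mathcal{X}) = -\sum_{x}\Pr[\mathcal{X}=x]\log\Pr[\mathcal{X}=x]$, and the identity $\Pr[x\mid y] = \Pr[x,y]/\Pr[y]$, one rewrites the difference as
\[
\mathsf{H}(\mathcal{X}) - \mathsf{H}(\mathcal{X}\mid\mathcal{Y}) = \sum_{x,y}\Pr[x,y]\,\log\frac{\Pr[x,y]}{\Pr[x]\,\Pr[y]}.
\]

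The easy direction is ``if'': assuming $\mathcal{X}$ and $\mathcal{Y}$ independent gives $\Pr[x,y] = \Pr[x]\Pr[y]$ for every pair, so every logarithm on the right is $\log 1 = 0$ and the difference is $0$; equivalently, substitute $\Pr[x\mid y] = \Pr[x]$ directly into the definition of $\mathsf{H}(\mathcal{X}\mid\mathcal{Y})$. For the ``only if'' direction I would invoke Gibbs' inequality (equivalently, apply Jensen's inequality to the strictly concave function $\log$): for distributions $(p_i)$ and $(q_i)$ on a common finite index set with $\sum_i q_i \le 1$, one has $\sum_i p_i \log(q_i/p_i) \le 0$, with equality if and only if $p_i = q_i$ for all $i$. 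Taking the index set to be the support of $(\mathcal{X},\mathcal{Y})$, $p_{(x,y)} = \Pr[x,y]$, and $q_{(x,y)} = \Pr[x]\Pr[y]$, the displayed sum equals $-\sum p_{(x,y)}\log(q_{(x,y)}/p_{(x,y)}) \ge 0$, and it is $0$ precisely when $\Pr[x,y] = \Pr[x]\Pr[y]$ on the support; since both sides already vanish off the support, this is exactly independence. Hence $\mathsf{H}(\mathcal{X}\mid\mathcal{Y}) = \mathsf{H}(\mathcal{X})$ forces $\mathcal{X}$ and $\mathcal{Y}$ to be independent.

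The one delicate point — and the step I expect to require the most care — is the bookkeeping around zero-probability outcomes: one must take the sums over supports for the rewriting above to be valid, and must check that the equality case of Jensen's/Gibbs' inequality really characterises independence rather than merely independence restricted to the support. Since the paper has already assumed that every relevant input (hence output) $s$-tuple occurs with non-zero probability, this subtlety is harmless in every application of the fact, but the proof of the general statement should handle it explicitly.
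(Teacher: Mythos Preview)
Your argument is correct and is the standard proof via nonnegativity of mutual information (equivalently, nonnegativity of the Kullback--Leibler divergence $D(P_{\mathcal{X},\mathcal{Y}}\,\|\,P_{\mathcal{X}}\otimes P_{\mathcal{Y}})$) together with its equality case. The only place worth tightening is the sentence ``since both sides already vanish off the support'': this is true, but it is a consequence of the equality case rather than an a priori fact. Concretely, once you have $\Pr[x,y]=\Pr[x]\Pr[y]$ on the support, summing gives $\sum_{(x,y)\in\mathrm{supp}}\Pr[x]\Pr[y]=1$, and since $\sum_{x,y}\Pr[x]\Pr[y]=1$ as well, every off-support pair must satisfy $\Pr[x]\Pr[y]=0$. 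You clearly anticipated this kind of bookkeeping in your final paragraph, so this is just a matter of making the step explicit.

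As for comparison with the paper: there is nothing to compare. The paper does not prove Theorem~\ref{conditional}; it introduces it with the phrase ``We make use of the following well-known fact'' and states it without proof, then immediately applies it to translate the perfect-security condition into the independence form~(\ref{perfect.eq}). Your write-up therefore supplies strictly more than the paper does on this point.
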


Suppose $A$ is the array representation of a $(t,s,v)$-AONT,  
where $1 \leq t < s$.
From Theorem \ref{conditional}, we have perfect security if and only if any $t$ inputs are independent of any $s-t$ outputs.
For an input $s$-tuple $\xx$ and for any $I \subseteq \{1, \dots ,s\}$, 
let $\xx_I = (x_i : i \in I)$. Thus, $\xx_I$  is formed by taking the row in $A$ corresponding to the input $\xx$, restricted to the columns in $I$.
Similarly, for an  output $s$-tuple $\yy$ and for any $J \subseteq \{1, \dots ,s\}$, let $\yy_J = (y_j : j \in J)$. Therefore,  $\yy_J$  is obtained by taking the row in $A$ corresponding to the 
output $\yy$, restricted to the columns in $J$.

We let $\mathbf{X}$ be a random variable that denotes an input $s$-tuple, and $\mathbf{Y}$ is a random variable that denotes an output $s$-tuple. $\mathbf{X_I}$ and $\mathbf{Y_J}$ are the random variables induced by specified subsets of the $s$ (respective) co-ordinates. 

Then the perfect security condition can be written as follows:
\begin{equation}
\label{perfect.eq}
\Pr [\mathbf{X_I} = \uu, \mathbf{Y_J}= \vv] = \Pr [\mathbf{X_I}= \uu] \, \Pr [ \mathbf{Y_J}= \vv]
\end{equation}
for all $|I| = t$ and $|J| = s-t$, and for all $t$-tuples $\uu$ and all $(s-t)$-tuples $\vv$.

%An equivalent formulation is the following:
%\begin{equation}
%\label{perfect2.eq}
%\Pr [Y_J|X_I] = \Pr [Y_J] \quad\text{for all $X,Y,I,J$  with $|I| = t$ and $|J| = s-t$}.
%\end{equation}

\begin{Theorem}\label{Thrm:unbiased_PS}
	Suppose a $(t,s,v)$-AONT has an array representation, say $A$,  that satisfies Definition \ref{defunbiased}, and suppose that  all the input $s$-tuples are equally probable. Then the AONT is  perfectly secure.
\end{Theorem}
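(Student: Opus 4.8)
The plan is to verify the perfect-security condition in the form of equation~(\ref{perfect.eq}) directly, using the unbiasedness properties of $A$ together with the uniform distribution on inputs. Since $\phi$ is a bijection and all $v^s$ input $s$-tuples have probability $v^{-s}$, every output $s$-tuple also has probability $v^{-s}$, so both $\mathbf{X}$ and $\mathbf{Y}$ are uniform on $\Gamma^s$; consequently each row of $A$ is ``weighted'' by $v^{-s}$, and all three probabilities appearing in~(\ref{perfect.eq}) reduce to counting rows of $A$ with prescribed entries in certain columns.

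First I would record the elementary observation that if an $(N,k,v)$-array is unbiased with respect to a column set $D$, then it is unbiased with respect to every subset $D' \subseteq D$: each $|D'|$-tuple occurs $N/v^{|D'|}$ times, obtained by summing over the $v^{|D|-|D'|}$ completions. Applying this to property~1 of Definition~\ref{defunbiased}, for any $I$ with $|I|=t$ the tuple $\uu$ occurs in columns $I$ of exactly $v^{s}/v^{t} = v^{s-t}$ rows, so $\Pr[\mathbf{X_I}=\uu] = v^{-t}$. Applying it to property~2, for any $J \subseteq \{s+1,\dots,2s\}$ with $|J| = s-t$ the tuple $\vv$ occurs in columns $J$ of exactly $v^{s}/v^{s-t} = v^{t}$ rows, so $\Pr[\mathbf{Y_J}=\vv] = v^{-(s-t)}$. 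Finally, property~3 says $A$ is unbiased with respect to $I \cup J$, which has $t + (s-t) = s$ columns, so the pair $(\uu,\vv)$ occurs in columns $I\cup J$ of exactly $v^s/v^s = 1$ row, giving $\Pr[\mathbf{X_I}=\uu, \mathbf{Y_J}=\vv] = v^{-s}$.

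Then I would simply check the identity $v^{-s} = v^{-t}\cdot v^{-(s-t)}$, which establishes~(\ref{perfect.eq}) for all admissible $I, J, \uu, \vv$. To conclude that this coincides with the entropy-based perfect security of Definition~\ref{defentropy}, I would note that conditions~1 and~2 there hold because $A$, being unbiased with respect to $\{1,\dots,s\}$ and $\{s+1,\dots,2s\}$, represents a bijection, while condition~3 is equivalent, by Theorem~\ref{conditional}, to the independence of $\mathbf{X_I}$ and $\mathbf{Y_J}$, i.e.\ to~(\ref{perfect.eq}).

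There is no serious obstacle here; the only point requiring a little care is the bookkeeping of the exponents of $v$ together with the preliminary remark that unbiasedness is inherited by sub-collections of columns. This remark is needed because Definition~\ref{defunbiased} only asserts unbiasedness for the full input and output column sets and for the size-$s$ sets $I\cup J$, whereas computing the marginals $\Pr[\mathbf{X_I}=\uu]$ and $\Pr[\mathbf{Y_J}=\vv]$ requires unbiasedness for the proper subsets $I$ and $J$.
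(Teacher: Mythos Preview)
Your proposal is correct and follows essentially the same route as the paper: compute each of the three probabilities in~(\ref{perfect.eq}) by counting rows of $A$ (using uniformity of the row weights) and then verify the product identity $v^{-s}=v^{-t}\cdot v^{-(s-t)}$. The only minor addition you make is the explicit remark that unbiasedness is inherited by subcollections of columns, which the paper leaves implicit when it asserts that $v^{s-t}$ rows satisfy $\xx_I=\uu$; this is a harmless clarification rather than a different argument.
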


\begin{proof}
	We prove this theorem by showing that values of any $t$ inputs are independent of any $s-t$ outputs. 
	
	In equation (\ref{perfect.eq}), we compute $\Pr [\mathbf{X_I} = \uu,\mathbf{Y_J}= \vv]$, $\Pr [\mathbf{X_I} = \uu]$ and $\Pr [\mathbf{Y_J}= \vv]$ as follows.
Suppose we fix $I$, $J$, $\uu$ and $\vv$, where $|I| = t$, $|J| = s-t$, $\uu$ is a $t$-tuple and $\vv$ is an $(s-t)$-tuple.
There is exactly one input $s$-tuple, say $\zz$, such that 
$\zz_I = \uu$ and $\phi(\zz)_J = \vv$.  Then 
\[\Pr [\mathbf{X_I} = \uu,\mathbf{Y_J}= \vv] = \Pr[\zz].\]
We also have
\[\Pr [\mathbf{X_I} = \uu] = \sum _{\{ \xx : \xx_I = \uu\}} \Pr[\xx]\]
and
\[\Pr [\mathbf{Y_J}= \vv] = \sum _{\{ \xx : \phi(\xx)_J = \vv \}} \Pr[\xx].\]

For a fixed $I$, and given any $t$-tuple $\uu$, there are $v^{s-t}$ rows of $A$ such that %for a row $\yy$, 
	$\xx_I = \uu$. Since all  $v^s$ rows  of $A$ equiprobable, the probability of any input $t$-tuple taking value $\uu$ can be calculated as follows:
	\[\Pr [\mathbf{X_I} = \uu] = \frac{v^{s-t}}{v^s} = v^{-t}.\]
	Similarly, for a fixed $J$, and given any $(s-t)$-tuple $\vv$, there are $v^t$ rows of $A$ such that %for a row $\yy$, 
	$\yy_J = \vv$. Hence, the probability of any specified output $(s-t)$-tuple is
	\[\Pr [\mathbf{Y_J}= \vv] = \frac{v^t}{v^{s}}= v^{t-s}.\]
	We also know that any input $t$-tuple and output $(s-t)$-tuple  appear together in exactly one row of $A$. Thus,
	\[\Pr[\mathbf{X_I} = \uu, \mathbf{Y_J}= \vv]= \frac{1}{v^s}.\]
	Hence, for any given input $t$-tuple and any given output $(s-t)$-tuple, we have 
	\[\Pr[\mathbf{X_I} = \uu, \mathbf{Y_J}= \vv]= v^{-s} = v^{-t} v^{t-s}= \Pr[\mathbf{X_I} = \uu] \, \Pr[\mathbf{Y_J}= \vv].\]
	Therefore, any specified $t$ inputs and any specified $s-t$ outputs are independent.%, which is what we intended to prove.
\end{proof}

We have proved  that $A$ provides perfect security if the probability 
distribution defined on the input $s$-tuples is equiprobable. Now we prove the converse.
%
%Let $\phi$ denote the AONT corresponding to $A$. The function $\phi$ is a bijection from the %set of all $s$-tuples to itself.  
%We assume throughout that every input $s$-tuple occurs with positive probability.
%That is, for every input $s$-tuple $\xx = (x_1, \dots , x_s)$, we assume
%$\Pr[\xx] > 0$. Since $\phi$ is a bijection, this implies that $\Pr(\yy) > 0$ for every output %$s$-tuple $\yy = (y_1, \dots , y_s)$.
%
Assume we have perfect security. 
%Suppose we fix $I$, $J$, $\uu$ and $\vv$ as above.
Suppose we fix an input $s$-tuple $\xx$ and we also fix $I$ such that $|I| = t$.
Denote $\yy = \phi(\xx)$ and $\xx_I = \uu$.
For any $J$ with $|J| = s-t$, 
from equation (\ref{perfect.eq}), we have
\[  \Pr [\xx] = \Pr [\mathbf{X_I} = \uu]  \, \Pr [\mathbf{Y_J} = \yy_J].\]
Since $t > 0$, we can choose a $J' \neq J$ such that $|J'| = s-t$.
Then
\[  \Pr [\xx] = \Pr [\mathbf{X_I} = \uu]  \, \Pr [\mathbf{Y_{J'}} = \yy_{J'}].\]
Since $\xx$, $I$ and $\uu$ are fixed and since $\Pr [\mathbf{X_I} = \uu] \neq 0$, the two previous equations imply that
\begin{equation}
\label{eq2}
  \Pr [\mathbf{Y_J} = \yy_J] = \Pr [\mathbf{Y_{J'}} = \yy_{J'}]
  \end{equation} for all $J, J'$ with $J \neq J'$ and $|J| = |J'| = s-t$.

Suppose that $\vv$ and  $\ww$ are $(s-t)$-tuples and $J$ and $J'$ are fixed, where $J \neq J'$. 
We say that the pair $(\vv,\ww)$ is \emph{$(J,J')$-compatible} if there is
an $s$-tuple $\yy$ such that $\yy_J = \vv$ and $\yy_{J'} = \ww$. Equivalently,
$\vv$ and $\ww$ are $(J,J')$-compatible if they agree on all co-ordinates in $J \cap J'$. 
The following lemma is a  consequence of (\ref{eq2}).
\begin{Lemma}
\label{lem1}
Suppose that $\vv$ and  $\ww$ are $(s-t)$-tuples and $J$ and $J'$ are fixed, where $J \neq J'$. If $(\vv,\ww)$ is $(J,J')$-compatible, then 
 \[\Pr [\mathbf{Y_J} = \vv] = \Pr [\mathbf{Y_{J'}} = \ww ] .\]
\end{Lemma}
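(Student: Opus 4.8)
The plan is to reduce the lemma directly to equation (\ref{eq2}), the only gap being that (\ref{eq2}) was derived for a particular input $s$-tuple $\xx$ and its image, while the lemma speaks of arbitrary compatible pairs of tuples.

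First I would observe that the derivation of (\ref{eq2}) used nothing special about $\xx$: it fixed $\xx$ and $I$, set $\yy = \phi(\xx)$, applied the perfect-security condition (\ref{perfect.eq}) for two different sets $J, J'$, and divided by $\Pr[\mathbf{X_I} = \uu] \neq 0$ (which is legitimate since every input $s$-tuple has non-zero probability by assumption). Since $\phi$ is a bijection on $\Gamma^s$, the tuple $\yy = \phi(\xx)$ ranges over all of $\Gamma^s$ as $\xx$ does, so (\ref{eq2}) actually holds in the form
\[ \Pr[\mathbf{Y_J} = \yy_J] = \Pr[\mathbf{Y_{J'}} = \yy_{J'}] \]
for \emph{every} $s$-tuple $\yy \in \Gamma^s$ and every pair $J \neq J'$ with $|J| = |J'| = s-t$.

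Next, given $(s-t)$-tuples $\vv$ and $\ww$ that are $(J,J')$-compatible, the definition of compatibility furnishes an $s$-tuple $\yy$ with $\yy_J = \vv$ and $\yy_{J'} = \ww$. Applying the universal form of (\ref{eq2}) to this $\yy$ yields
\[ \Pr[\mathbf{Y_J} = \vv] = \Pr[\mathbf{Y_J} = \yy_J] = \Pr[\mathbf{Y_{J'}} = \yy_{J'}] = \Pr[\mathbf{Y_{J'}} = \ww], \]
which is exactly the desired conclusion.

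There is no genuinely hard step here; the only points requiring care are the observation that (\ref{eq2}) propagates from one output $s$-tuple to all of them via bijectivity, and the remark that the hypothesis $1 \leq t < s$ ensures distinct $(s-t)$-subsets $J \neq J'$ of $\{s+1,\dots,2s\}$ exist, so that the statement is not vacuous.
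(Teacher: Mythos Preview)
Your proposal is correct and follows exactly the approach intended by the paper, which simply states that the lemma is ``a consequence of (\ref{eq2})''; you have merely spelled out the two implicit steps---that (\ref{eq2}) holds for every $\yy \in \Gamma^s$ via the bijectivity of $\phi$, and that $(J,J')$-compatibility furnishes the witnessing $\yy$. One trivial slip: in this section the index sets $J, J'$ are taken as subsets of $\{1,\dots,s\}$ rather than $\{s+1,\dots,2s\}$, but this has no effect on the argument.
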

Suppose that $(\vv,\ww)$ and $(\vv',\ww)$ are both $(J,J')$-compatible.
From Lemma \ref{lem1}, it follows that
\[\Pr [\mathbf{Y_J} = \vv] = \Pr [\mathbf{Y_{J'}} = \ww ] \]
and \[\Pr [\mathbf{Y_J} = \vv'] = \Pr [\mathbf{Y_{J'}} = \ww ] ,\] so
\begin{equation}
\label{eq3}
\Pr [\mathbf{Y_J} = \vv] = \Pr [\mathbf{Y_J} = \vv'].
\end{equation}

Let $\dist(\cdot,\cdot)$ denote the hamming distance between any two vectors of the same length. We have the following lemma.
\begin{Lemma}
\label{lem2}
Suppose that that  $|J| = s-t$, and suppose that $\vv$ and  $\vv'$ are $(s-t)$-tuples such that  $\dist(\vv,\vv') = 1$.
Then 
\[\Pr [\mathbf{Y_J} = \vv] = \Pr [\mathbf{Y_J} = \vv'].\]
\end{Lemma}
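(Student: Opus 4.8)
The plan is to reduce Lemma \ref{lem2} to the already-established equation (\ref{eq3}) by exhibiting, for two $(s-t)$-tuples $\vv$ and $\vv'$ at Hamming distance $1$, a suitable choice of $J'$ and an $(s-t)$-tuple $\ww$ so that both $(\vv,\ww)$ and $(\vv',\ww)$ are $(J,J')$-compatible. Say $\vv$ and $\vv'$ differ only in the co-ordinate indexed by some $j_0 \in J$. The idea is to pick $J'$ that shares with $J$ all co-ordinates \emph{except} $j_0$; concretely, since $|J| = s-t < s$, there is at least one index $j_1 \in \{s+1,\dots,2s\} \setminus J$, and we set $J' = (J \setminus \{j_0\}) \cup \{j_1\}$, so that $|J'| = s-t$ and $J \cap J' = J \setminus \{j_0\}$.

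First I would verify that $J \neq J'$ (immediate, since $j_0 \in J \setminus J'$ and $|J|=|J'|$). Next I would construct $\ww$: on the co-ordinates of $J \cap J' = J \setminus \{j_0\}$, let $\ww$ agree with $\vv$ (equivalently with $\vv'$, since they agree there), and on the remaining co-ordinate of $J'$, namely $j_1$, assign $\ww$ any alphabet value whatsoever. Then by the characterization of $(J,J')$-compatibility given just before Lemma \ref{lem1} — two tuples are $(J,J')$-compatible iff they agree on all co-ordinates of $J \cap J'$ — both $(\vv,\ww)$ and $(\vv',\ww)$ are $(J,J')$-compatible, because $\vv$ and $\ww$ agree on $J \cap J'$ and likewise $\vv'$ and $\ww$ agree on $J \cap J'$. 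Applying equation (\ref{eq3}) with this $J'$ and $\ww$ then yields $\Pr[\mathbf{Y_J} = \vv] = \Pr[\mathbf{Y_J} = \vv']$, which is exactly the claim.

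The only point needing a moment of care is the existence of the extra index $j_1 \notin J$, which is where the hypothesis $t \geq 1$ (hence $|J| = s-t \leq s-1 < s$, so $J$ is a proper subset of the $s$ available output columns) is used; this is the same place $t > 0$ was invoked to produce $J' \neq J$ in the derivation of (\ref{eq2}). I do not anticipate a substantive obstacle here: the lemma is a purely combinatorial bookkeeping step that repackages (\ref{eq3}), and the main subtlety is simply making sure the chosen $J'$ and $\ww$ genuinely satisfy the compatibility condition on both pairs simultaneously, which the construction above guarantees by design.

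\begin{proof}
Since $\dist(\vv,\vv') = 1$, there is a unique co-ordinate $j_0 \in J$ on which $\vv$ and $\vv'$ differ; on all other co-ordinates of $J$ they agree. Because $|J| = s-t \leq s-1$, the set $J$ is a proper subset of $\{s+1,\dots,2s\}$, so we may choose an index $j_1 \in \{s+1,\dots,2s\} \setminus J$. Set $J' = (J \setminus \{j_0\}) \cup \{j_1\}$. Then $|J'| = s-t$, and $j_0 \in J \setminus J'$, so $J \neq J'$; moreover $J \cap J' = J \setminus \{j_0\}$.

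Define an $(s-t)$-tuple $\ww$ indexed by the co-ordinates of $J'$ as follows: on each co-ordinate in $J \cap J' = J \setminus \{j_0\}$, let $\ww$ take the same value as $\vv$ (equivalently $\vv'$, since $\vv$ and $\vv'$ agree there), and on the co-ordinate $j_1$, let $\ww$ take an arbitrary value from $\Gamma$. Since $\vv$ agrees with $\ww$ on all co-ordinates of $J \cap J'$, the pair $(\vv,\ww)$ is $(J,J')$-compatible; likewise, since $\vv'$ agrees with $\ww$ on all co-ordinates of $J \cap J'$, the pair $(\vv',\ww)$ is $(J,J')$-compatible. Applying (\ref{eq3}) with this choice of $J$, $J'$ and $\ww$ gives
\[ \Pr [\mathbf{Y_J} = \vv] = \Pr [\mathbf{Y_J} = \vv'], \]
as required.
\end{proof}
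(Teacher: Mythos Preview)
Your proof is correct and follows essentially the same approach as the paper: you remove from $J$ the one co-ordinate $j_0$ where $\vv$ and $\vv'$ differ, replace it by some $j_1\notin J$ to form $J'$ with $|J\cap J'|=s-t-1$, define $\ww$ to agree with $\vv$ and $\vv'$ on $J\cap J'$, and then invoke (\ref{eq3}). Your write-up is simply a more explicit version of the paper's argument, including the observation that $t\geq 1$ guarantees the existence of $j_1$.
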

\begin{proof}
Choose $J'$ such that $|J \cap J'| = s-t-1$, and $(\vv,\ww)$ and $(\vv',\ww)$ are both $(J,J')$-compatible. That is $J'$ contains the $s-t-1$ co-ordinates where $\vv$ and $\vv'$ agree, along with one additional co-ordinate not in $J$, and $\ww$ agrees with $\vv$ and $\vv'$ on the $s-t-1$ common co-ordinates. Then the desired result follows from (\ref{eq3}).
\end{proof} 

\begin{Lemma}
\label{lem3}
For all $J$ such that $|J| = s-t$, and for any two distinct $(s-t)$-tuples $\vv$ and $\vv'$, it holds that 
\[\Pr [\mathbf{Y_J} = \vv] = \Pr [\mathbf{Y_J} = \vv'].\]
\end{Lemma}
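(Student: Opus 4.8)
The plan is to bootstrap from Lemma \ref{lem2}, which gives equality of $\Pr[\mathbf{Y_J} = \vv]$ for $(s-t)$-tuples at Hamming distance $1$, up to equality for \emph{all} pairs of $(s-t)$-tuples. The key observation is that the ``distance-$1$ equivalence'' relation on $\Gamma^{s-t}$ generates the full relation: any two $(s-t)$-tuples $\vv$ and $\vv'$ are connected by a chain $\vv = \vv^{(0)}, \vv^{(1)}, \dots, \vv^{(m)} = \vv'$ in which consecutive tuples differ in exactly one coordinate. Concretely, if $\vv$ and $\vv'$ disagree in coordinates $i_1 < \dots < i_m$, then walk from $\vv$ to $\vv'$ by fixing these coordinates one at a time, producing $m$ intermediate tuples each adjacent (in Hamming distance) to the next.

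The proof then proceeds as follows. First I would fix $J$ with $|J| = s-t$ and two arbitrary distinct $(s-t)$-tuples $\vv, \vv'$. Writing out the coordinates on which they differ and constructing the chain described above, I would apply Lemma \ref{lem2} to each consecutive pair $(\vv^{(\ell)}, \vv^{(\ell+1)})$ — this is legitimate precisely because $\dist(\vv^{(\ell)}, \vv^{(\ell+1)}) = 1$ by construction. Chaining the resulting equalities $\Pr[\mathbf{Y_J} = \vv^{(\ell)}] = \Pr[\mathbf{Y_J} = \vv^{(\ell+1)}]$ for $\ell = 0, \dots, m-1$ yields $\Pr[\mathbf{Y_J} = \vv] = \Pr[\mathbf{Y_J} = \vv']$, which is exactly the claim. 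Since $J$, $\vv$, $\vv'$ were arbitrary, the lemma follows.

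There is essentially no serious obstacle here; the statement is a routine ``connectedness of the Hamming graph'' argument once Lemma \ref{lem2} is in hand. The only point requiring a word of care is ensuring that every intermediate tuple $\vv^{(\ell)}$ in the chain is itself a legitimate $(s-t)$-tuple over $\Gamma$ so that $\Pr[\mathbf{Y_J} = \vv^{(\ell)}]$ makes sense and Lemma \ref{lem2} applies to it — but this is automatic since we only ever change one coordinate to another alphabet symbol. One might also note the degenerate case $s - t = 0$ is excluded (or trivial), but the hypotheses already imply $t < s$ in the relevant context. I would keep the write-up to a few sentences: state the chain, invoke Lemma \ref{lem2} along it, conclude by transitivity.

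I would also remark that the three lemmas together establish that, under perfect security, each $\mathbf{Y_J}$ (for $|J| = s-t$) is uniformly distributed on $\Gamma^{s-t}$, which is the key fact needed to finish the converse direction of the main theorem — namely, that perfect security forces the input distribution to be uniform. This Lemma \ref{lem3} is the crucial last combinatorial step before that conclusion can be drawn.
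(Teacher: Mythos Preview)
Your proposal is correct and takes essentially the same approach as the paper: the paper proves the lemma by induction on $\dist(\vv,\vv')$, reducing distance $d$ to distance $d-1$ by inserting an intermediate tuple $\vv''$ with $\dist(\vv'',\vv')=1$ and invoking Lemma~\ref{lem2}, which is exactly your chain argument written inductively rather than iteratively. Your remarks about the degenerate case and the role of the lemma in the converse direction are also accurate.
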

\begin{proof}
We prove the result by induction on $\dist(\vv,\vv')$, where $1 \leq \dist(\vv,\vv') \leq s-t$.
Lemma \ref{lem2} establishes the base case, where  $\dist(\vv,\vv') = 1$.
Suppose the result holds when $\dist(\vv,\vv') \leq d$, where $1 \leq d \leq s-t-1$. It is easy to find $\vv''$ such that $\dist(\vv,\vv'') = d-1$ and $\dist(\vv'',\vv') = 1$.
By induction, we have 
\[\Pr [\mathbf{Y_J} = \vv] = \Pr [\mathbf{Y_J} = \vv'']\] and
\[\Pr [\mathbf{Y_J} = \vv''] = \Pr [\mathbf{Y_J} = \vv'],\] so it follows immediately that 
\[\Pr [\mathbf{Y_J} = \vv] = \Pr [\mathbf{Y_J} = \vv'].\]
\end{proof} 

\begin{Corollary}
\label{cor1}
$\Pr [\mathbf{Y_J} = \vv] = 1/v^{s-t}$ for all $(s-t)$-tuples $\vv$.
\end{Corollary}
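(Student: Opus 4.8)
The plan is to deduce Corollary~\ref{cor1} directly from Lemma~\ref{lem3} together with the law of total probability. Fix any $J$ with $|J| = s-t$. By Lemma~\ref{lem3}, the quantity $\Pr[\mathbf{Y_J} = \vv]$ takes the same value, call it $p$, for every $(s-t)$-tuple $\vv$; that is, the marginal distribution of $\mathbf{Y_J}$ is uniform over $\Gamma^{s-t}$. Since these events are mutually exclusive and exhaustive as $\vv$ ranges over all $v^{s-t}$ possible $(s-t)$-tuples, summing gives $1 = \sum_{\vv} \Pr[\mathbf{Y_J} = \vv] = v^{s-t} \, p$, hence $p = 1/v^{s-t}$.

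I would write this out in two short sentences: first invoke Lemma~\ref{lem3} to assert that $\Pr[\mathbf{Y_J} = \vv]$ is independent of $\vv$, then normalize. There is essentially no obstacle here — the corollary is a routine normalization consequence of the preceding lemma. The only thing worth a moment's care is making sure the probabilities in question are well-defined marginals, but this is already guaranteed: we are working with the random variable $\mathbf{Y}$ on output $s$-tuples (with every output occurring with non-zero probability, as noted at the start of Section~2), and $\mathbf{Y_J}$ is its projection onto the coordinates in $J$, so $\{\mathbf{Y_J} = \vv\}$ for $\vv \in \Gamma^{s-t}$ is indeed a partition of the sample space.

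One point to state explicitly for the reader: the value $1/v^{s-t}$ does not depend on the choice of $J$ either, which is immediate since the argument applies verbatim to every $J$ of size $s-t$. This uniform-marginal conclusion is presumably what will be combined, in the next step of the paper, with the perfect-security equation~(\ref{perfect.eq}) to force $\Pr[\xx] = \Pr[\mathbf{X_I} = \uu] \cdot v^{t-s}$ and ultimately to pin down $\Pr[\xx] = v^{-s}$ for every input $s$-tuple, completing the converse direction of the main theorem.
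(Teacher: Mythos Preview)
Your proof is correct and matches the paper's own argument exactly: invoke Lemma~\ref{lem3} to see that $\Pr[\mathbf{Y_J}=\vv]$ is the same for all $v^{s-t}$ tuples $\vv$, then normalize so the total probability is $1$. The paper compresses this into a single sentence, but the content is identical.
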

\begin{proof}
There are $v^{s-t}$ choices for $\vv$, and $\Pr [\mathbf{Y_J} = \vv] $ is independent of $\vv$ from Lemma \ref{lem3}.
\end{proof}

By  similar arguments, we can obtain the following.
\begin{Lemma}
\label{lem4}
\[\Pr [\mathbf{X_I} = \uu] = \Pr [\mathbf{X_I} = \uu']\]
for all $I$ such that $|I| = t$, and for any two distinct $t$-tuples $\uu$ and $\uu'$.
\end{Lemma}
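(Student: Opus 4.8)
The plan is to replay, with the roles of inputs and outputs interchanged, the chain of arguments — equations (\ref{eq2}) and (\ref{eq3}), Lemmas \ref{lem1}--\ref{lem3}, and Corollary \ref{cor1} — that was used to pin down the marginals $\Pr[\mathbf{Y_J}=\cdot]$. Under this dictionary a $t$-subset $I$ of the input co-ordinates $\{1,\dots,s\}$ plays the part formerly played by an $(s-t)$-subset $J$ of the output co-ordinates, and an output $s$-tuple plays the part of an input $s$-tuple.

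First I would prove the analogue of (\ref{eq2}): for any fixed input $s$-tuple $\xx$ and any two distinct $t$-subsets $I,I'$ of $\{1,\dots,s\}$,
\[
  \Pr[\mathbf{X_I}=\xx_I] = \Pr[\mathbf{X_{I'}}=\xx_{I'}].
\]
Indeed, fixing any $(s-t)$-subset $J$ of the output co-ordinates and applying (\ref{perfect.eq}) gives $\Pr[\xx] = \Pr[\mathbf{X_I}=\xx_I]\,\Pr[\mathbf{Y_J}=\phi(\xx)_J]$ for every $I$ with $|I|=t$; since neither $\Pr[\xx]$ nor $\Pr[\mathbf{Y_J}=\phi(\xx)_J]$ depends on $I$, and since $\Pr[\mathbf{Y_J}=\phi(\xx)_J]=1/v^{s-t}\neq 0$ by Corollary \ref{cor1} (equivalently, it is non-zero because $\phi(\xx)$ occurs with non-zero probability), the expressions for $I$ and for $I'$ may be equated. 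This step is available because $1\le t<s$ guarantees $\binom{s}{t}\ge 2$.

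Next I would carry over the compatibility machinery: call two $t$-tuples $\uu,\uu'$ \emph{$(I,I')$-compatible} when they agree on the co-ordinates in $I\cap I'$, and deduce, exactly as in Lemma \ref{lem1} and (\ref{eq3}), that whenever $(\uu,\uu'')$ and $(\uu',\uu'')$ are both $(I,I')$-compatible one has $\Pr[\mathbf{X_I}=\uu]=\Pr[\mathbf{X_I}=\uu']$. The analogue of Lemma \ref{lem2} then follows for $\dist(\uu,\uu')=1$ by choosing $I'$ to consist of the $t-1$ co-ordinates of $I$ on which $\uu$ and $\uu'$ agree together with one co-ordinate outside $I$ (possible since $t<s$), and letting $\uu''$ agree with both on those $t-1$ co-ordinates. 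Finally, induction on $\dist(\uu,\uu')$, identical in form to the proof of Lemma \ref{lem3}, establishes Lemma \ref{lem4}.

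The argument is essentially routine once the input/output dictionary is fixed; the closest thing to an obstacle is simply the bookkeeping of the dimension constraints that keep every subset choice legal (that $\binom{s}{t}\ge 2$, that every $t$-subset misses at least one co-ordinate, and that $t-1\ge 0$), all of which follow from the standing hypothesis $1\le t<s$, together with the observation that the divisor $\Pr[\mathbf{Y_J}=\cdot]$ appearing in the analogue of (\ref{eq2}) never vanishes.
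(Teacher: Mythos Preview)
Your proposal is correct and follows exactly the route the paper intends: the paper's own ``proof'' of Lemma~\ref{lem4} is the single phrase ``By similar arguments, we can obtain the following,'' and what you have written is a faithful unpacking of those similar arguments, with the roles of $I$ and $J$ (and of $t$ and $s-t$) swapped throughout. Your bookkeeping on the side conditions ($1\le t<s$ giving $\binom{s}{t}\ge 2$, the existence of a co-ordinate outside $I$, and the non-vanishing of $\Pr[\mathbf{Y_J}=\cdot]$) is exactly what is needed to make the symmetry go through.
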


\begin{Corollary}
\label{cor2}
$\Pr [\mathbf{X_I} = \uu] = 1/v^{t}$ for all $t$-tuples $\uu$.
\end{Corollary}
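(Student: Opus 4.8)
The plan is to mirror the argument already carried out for the outputs, transposing it to the input side. Just as equation~(\ref{eq2}) was derived by fixing an input $s$-tuple $\xx$ and a subset $I$ with $|I|=t$ and then varying the output subset $J$, here I would fix an output $s$-tuple $\yy$ and a subset $J$ with $|J|=s-t$, and vary the input subset $I$. Concretely, write $\xx=\phi^{-1}(\yy)$ and $\vv=\yy_J$. For any $I$ with $|I|=t$, equation~(\ref{perfect.eq}) gives $\Pr[\xx]=\Pr[\mathbf{X_I}=\xx_I]\,\Pr[\mathbf{Y_J}=\vv]$. Since $t<s$ (so $s-t>0$), there is at least one other subset $I'\neq I$ with $|I'|=t$, and the same equation gives $\Pr[\xx]=\Pr[\mathbf{X_{I'}}=\xx_{I'}]\,\Pr[\mathbf{Y_J}=\vv]$. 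Because $\Pr[\mathbf{Y_J}=\vv]=1/v^{s-t}\neq 0$ (Corollary~\ref{cor1}), or simply because it is a fixed nonzero quantity, these two identities force $\Pr[\mathbf{X_I}=\xx_I]=\Pr[\mathbf{X_{I'}}=\xx_{I'}]$ for all $I\neq I'$ with $|I|=|I'|=t$.

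From this analogue of~(\ref{eq2}), I would then replay Lemmas~\ref{lem1}, \ref{lem2}, \ref{lem3} verbatim with the roles of inputs and outputs swapped and with $s-t$ replaced by $t$ throughout. That is: define a pair of $t$-tuples $(\uu,\uu')$ to be $(I,I')$-compatible if they agree on the coordinates of $I\cap I'$; deduce from the input analogue of~(\ref{eq2}) that $\Pr[\mathbf{X_I}=\uu]=\Pr[\mathbf{X_{I'}}=\uu']$ whenever the pair is $(I,I')$-compatible; chain two such equalities through a common $\uu''$ to conclude that $\Pr[\mathbf{X_I}=\uu]$ is independent of $\uu$ among $t$-tuples at Hamming distance one; and finally induct on Hamming distance (from $1$ up to $t$) to get equality for all pairs of distinct $t$-tuples $\uu,\uu'$. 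One small caveat worth a sentence: the distance-one step of Lemma~\ref{lem2} chooses an auxiliary subset $J'$ with $|J\cap J'|=s-t-1$ together with one coordinate outside $J$; on the input side the auxiliary $I'$ has $|I\cap I'|=t-1$ and one extra coordinate from the complement $\{1,\dots,s\}\setminus I$, and such a coordinate exists precisely because $t<s$, which is the standing hypothesis.

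Since the excerpt only asks for the statement of Lemma~\ref{lem4} (the displayed equality of input marginals), the proof is just the transposed chain above, and I would phrase it compactly rather than restating all the intermediate lemmas: "Replacing the role of the outputs by the inputs and $s-t$ by $t$ throughout the arguments leading to Lemma~\ref{lem3}, and using $t<s$ to guarantee the existence of the auxiliary index sets, we obtain the claim." The only genuine obstacle is bookkeeping --- making sure every cardinality constraint that was automatically satisfied on the output side (namely $s-t\geq 1$, used to pick $J'\neq J$) still holds on the input side (namely $t\geq 1$, used to pick $I'\neq I$, and $s-t\geq 1$, used to find a coordinate outside $I$); both are in force under the hypothesis $1\leq t<s$, so no new idea is needed.

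Corollary~\ref{cor2} is then immediate in the same way Corollary~\ref{cor1} followed from Lemma~\ref{lem3}: there are $v^{t}$ possible $t$-tuples $\uu$, the probabilities $\Pr[\mathbf{X_I}=\uu]$ are all equal by Lemma~\ref{lem4}, and they sum to $1$, so each equals $1/v^{t}$.
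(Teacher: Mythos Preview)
Your proposal is correct and follows exactly the approach the paper takes: the paper introduces Lemma~\ref{lem4} with the phrase ``By similar arguments, we can obtain the following,'' leaving the transposition of the output-side argument to the reader, and then states Corollary~\ref{cor2} as an immediate consequence. Your write-up is in fact more careful than the paper's, since you explicitly verify that the cardinality constraints needed for the auxiliary index sets (namely $t\geq 1$ and $s-t\geq 1$) are available under the standing hypothesis $1\leq t<s$.
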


\begin{Theorem} If a $(t,s,v)$-AONT provides perfect security, then 
$\Pr [\mathbf{X} = \xx] = 1/v^s$ for all $s$-tuples $\xx$.
\end{Theorem}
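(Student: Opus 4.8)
The plan is to combine the perfect-security identity (\ref{perfect.eq}) with the two distributional facts already in hand, namely Corollary \ref{cor1} ($\Pr[\mathbf{Y_J} = \vv] = 1/v^{s-t}$) and Corollary \ref{cor2} ($\Pr[\mathbf{X_I} = \uu] = 1/v^{t}$), after using the structure of the array to identify the joint probability of a single input/output pair. In effect, once we know the marginals on any $t$ input coordinates and any $s-t$ output coordinates are uniform, perfect security forces the full input distribution to be uniform as well, because a single input $s$-tuple is completely determined by its values on $t$ input coordinates together with the values of its image on $s-t$ output coordinates.

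First I would fix an arbitrary input $s$-tuple $\xx$ and set $\yy = \phi(\xx)$. Choose any $I \subseteq \{1, \dots, s\}$ with $|I| = t$ and any set $J$ of $s-t$ output coordinates, and put $\uu = \xx_I$ and $\vv = \yy_J$; the particular choice of $I$ and $J$ is immaterial. Next I would invoke the observation made just before Theorem \ref{Thrm:unbiased_PS}: since a perfectly secure AONT is weakly secure, its array representation is unbiased with respect to $I \cup J$, which (as $|I \cup J| = s$, with multiplicity $v^s/v^s = 1$) means that $\xx$ is the \emph{unique} input $s$-tuple with $\xx_I = \uu$ and $\phi(\xx)_J = \vv$. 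Hence $\Pr[\mathbf{X_I} = \uu, \mathbf{Y_J} = \vv] = \Pr[\mathbf{X} = \xx]$.

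Finally I would apply (\ref{perfect.eq}) together with Corollaries \ref{cor1} and \ref{cor2} to conclude
\[
\Pr[\mathbf{X} = \xx] = \Pr[\mathbf{X_I} = \uu]\,\Pr[\mathbf{Y_J} = \vv] = v^{-t}\cdot v^{-(s-t)} = v^{-s},
\]
and since $\xx$ was arbitrary this proves the theorem. I do not anticipate any real obstacle: all the substance has already been extracted in Lemmas \ref{lem1}--\ref{lem4} and their corollaries. The only points that warrant a sentence of care are the uniqueness of $\xx$ among $s$-tuples agreeing with $(\uu,\vv)$ on $I \cup J$ — which is precisely the unbiasedness of the array on those $s$ columns, inherited through weak security — and the standing assumption $1 \le t < s$ under which Corollaries \ref{cor1} and \ref{cor2} were derived.
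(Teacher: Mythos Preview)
Your proposal is correct and follows essentially the same route as the paper: fix $\xx$, pick $I$ and $J$, use uniqueness of the row matching $(\uu,\vv)$ to identify $\Pr[\mathbf{X}=\xx]$ with $\Pr[\mathbf{X_I}=\uu,\mathbf{Y_J}=\vv]$, then apply (\ref{perfect.eq}) and Corollaries \ref{cor1} and \ref{cor2}. If anything, you are slightly more explicit than the paper in justifying the uniqueness step via weak security and unbiasedness; the paper simply asserts the identity $\Pr[\mathbf{X}=\xx]=\Pr[\mathbf{X_I}=\uu,\mathbf{Y_J}=\vv]$, relying on the same observation made earlier in the proof of Theorem \ref{Thrm:unbiased_PS}.
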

\begin{proof}
Choose any $s$-tuple $\xx$ and any $I$ and $J$ with $|I| = t$ and $|J| = s-t$. Let $\yy = \phi(\xx)$. 
Denote $\xx_I = \uu$ and $\yy_J= \vv$.
From (\ref{perfect.eq}), we have
\[\Pr [\mathbf{X} = \xx] = \Pr [\mathbf{X_I} = \uu, \mathbf{Y_J} = \vv] 
= \Pr [\mathbf{X_I} = \uu] \, \Pr [\mathbf{Y_J} = \vv].\]
Now, from Corollaries \ref{cor1} and \ref{cor2}, we have
\[\Pr [\mathbf{X_I} = \uu] = \frac{1}{v^{t}}\]
and 
\[\Pr [\mathbf{Y_J} = \vv] = \frac{1}{v^{s-t}}.\]
Therefore,
\[\Pr[\xx] = \frac{1}{v^{t}} \times \frac{1}{v^{s-t}} =  \frac{1}{v^{s}}.\]
\end{proof}

\section{Randomized AONTs}

Randomized AONTs were proposed by Rivest \cite{R} and they have since been considered by several authors. In this section, we show how a randomized AONT can provide perfect security when the inputs are drawn from an arbitrary probability distribution. Suppose we have a
weakly secure $(t,s,v)$-AONT, say $\phi : \Gamma^s \rightarrow \Gamma^s$. We use it to construct a \emph{randomized AONT} that transforms $t$ inputs into $s$ outputs, as described in Figure \ref{rand.fig}.

We will prove that the perfect security condition is satisfied for the $t$ designated inputs. But first, we observe that Example \ref{2_AONT_Example-2} provides an illustration. We can view 
Example \ref{2_AONT_Example-2} as a randomized AONT, where $x_2$ is the designated input and $x_1$ is as random input. We noted already that this example yields perfect security for the input $x_2$. 

Here is the statement and proof of the security of randomized AONT in general.

\begin{figure}
\caption{A Randomized AONT}
\label{rand.fig}

\begin{center}
\begin{tabular}{lp{5in}}
\textbf{input} & A $(t,s,v)$-AONT, say $\phi$, and $t$ inputs. %Carry out the following steps:
\\
\textbf{step 1} & Assign the $t$ given inputs to any $t$ of the $x_i$'s (we call these inputs \emph{designated inputs}).\\
\textbf{step 2} & Choose the remaining $s-t$ of the $x_i$'s independently and uniformly at random from $\Gamma$ (we call these inputs \emph{random inputs}). \\
\textbf{step 3} & Output $(y_1, \dots , y_s) = \phi (x_1, \dots , x_s)$.
\end{tabular}
\end{center}
\end{figure}

\begin{Theorem}
	Suppose we use a weakly secure $(t,s,v)$-AONT as a randomized AONT, as described in Figure \ref{rand.fig}. Let 
	$\mathcal{X}$ denote the $t$ designated inputs. Then, for all $\mathcal{Y} \subseteq \{\mathbf{Y_1}, \dots , \mathbf{Y_s}\}$ with 
		$|\mathcal{Y}|  = s-t$, it holds that 
		\[
			\mathsf{H}( \mathcal{X}  \mid \mathcal{Y} ) = \mathsf{H}(\mathcal{X}).
			\]
\end{Theorem}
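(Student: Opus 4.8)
The plan is to reduce the claim, via Theorem~\ref{conditional}, to showing that the $t$ designated inputs are independent of every collection of $s-t$ outputs; that is, I will verify the factorization~(\ref{perfect.eq}) directly. Fix the $t$-subset $I \subseteq \{1,\dots,s\}$ of coordinates to which the designated inputs are assigned in step~1, and write $\bar I = \{1,\dots,s\}\setminus I$ for the coordinates of the $s-t$ random inputs. Let $p(\uu) = \Pr[\mathbf{X_I} = \uu]$ denote the (arbitrary) distribution on the designated inputs. Since the random inputs are chosen uniformly and independently of the designated ones, the distribution on the full input $s$-tuple factors: for $\xx \in \Gamma^s$ with $\xx_I = \uu$, we have $\Pr[\mathbf{X} = \xx] = p(\uu)\, v^{-(s-t)}$, and this value does not depend on $\xx_{\bar I}$.

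Next I would invoke Theorem~\ref{equiv}: a weakly secure $(t,s,v)$-AONT has an array representation $A$ that is unbiased with respect to $I \cup J$ for every $J$ with $|J| = s-t$ (identifying output coordinates with $\{1,\dots,s\}$ here). Since $|I \cup J| = t + (s-t) = s$, the unbiased condition says that each of the $v^s$ pairs $(\uu,\vv)$, with $\uu$ a $t$-tuple on the columns of $I$ and $\vv$ an $(s-t)$-tuple on the columns of $J$, occurs in exactly $v^s/v^s = 1$ row of $A$; call that row $\zz(\uu,\vv)$. Then $\Pr[\mathbf{X_I} = \uu, \mathbf{Y_J} = \vv] = \Pr[\mathbf{X} = \zz(\uu,\vv)] = p(\uu)\, v^{-(s-t)}$.

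It then remains to identify the marginal $\Pr[\mathbf{Y_J} = \vv]$. The rows $\xx$ of $A$ with $\phi(\xx)_J = \vv$ are exactly $\{\zz(\uu',\vv) : \uu' \in \Gamma^t\}$, one for each $t$-tuple $\uu'$, so $\Pr[\mathbf{Y_J} = \vv] = \sum_{\uu'} p(\uu')\, v^{-(s-t)} = v^{-(s-t)}$, using $\sum_{\uu'} p(\uu') = 1$. Thus $\mathbf{Y_J}$ is uniform, and $\Pr[\mathbf{X_I} = \uu]\,\Pr[\mathbf{Y_J} = \vv] = p(\uu)\, v^{-(s-t)} = \Pr[\mathbf{X_I} = \uu, \mathbf{Y_J} = \vv]$, which is exactly~(\ref{perfect.eq}). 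Independence of $\mathbf{X_I}$ and $\mathbf{Y_J}$ for every such $J$ then gives $\mathsf{H}(\mathcal{X} \mid \mathcal{Y}) = \mathsf{H}(\mathcal{X})$ by Theorem~\ref{conditional}.

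The computation itself is routine; the only point that needs care — and the reason weak security of the underlying AONT suffices even when the designated-input distribution is far from uniform — is the marginalization step for $\Pr[\mathbf{Y_J} = \vv]$: the uniform, independent choice of the $s-t$ random inputs is precisely what makes the common factor $v^{-(s-t)}$ pull out of the sum, leaving $\sum_{\uu'} p(\uu') = 1$ and a uniform output marginal. Without the randomization (as in Example~\ref{2_AONT_Example-2}), this marginal need not be uniform and perfect security can fail.
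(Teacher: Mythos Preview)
Your proof is correct and follows essentially the same approach as the paper: both use the unbiased property of the array representation (via Theorem~\ref{equiv}) to identify, for each pair $(\uu,\vv)$, the unique row with $\xx_I=\uu$ and $\phi(\xx)_J=\vv$, then compute the joint as $p(\uu)\,v^{-(s-t)}$ and the marginal $\Pr[\mathbf{Y_J}=\vv]=v^{-(s-t)}$ by summing out $\uu$, concluding independence via Theorem~\ref{conditional}. The only difference is presentational order (you compute the joint before the marginal; the paper does the reverse) and your added remark explaining why the uniform random inputs are what make the argument go through.
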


\begin{proof}
We assume an arbitrary probability distribution on $\mathcal{X}$, subject to the condition that
$\Pr [\mathcal{X} = \uu] > 0$ for all $\uu$. Let $\mathcal{R}$ denote the $s-t$ inputs that are chosen randomly; thus $\Pr [\mathcal{R} = \rr] = 1/v^{s-t}$ for all $(s-t)$-tuples $\rr$.

We show that $\mathcal{X}$ is independent of $\mathcal{Y}$. That is, we prove that the following equation holds:
\begin{equation}
\label{perfectr.eq}
\Pr [\mathcal{X} = \uu, \mathbf{Y_J}= \vv] = \Pr [\mathcal{X} = \uu] \, \Pr [ \mathbf{Y_J}= \vv]
\end{equation}
for all $|J| = s-t$, and for all $t$-tuples $\uu$ and all $(s-t)$-tuples $\vv$.

We first compute the probability distribution on $\mathbf{Y_J}$. Fix an $(s-t)$-tuple $\vv$.
Then, for any $t$-tuple $\uu$, there is a unique $(s-t)$-tuple $\rr$ such that $\phi(\uu, \rr)_J = \vv$. This is easily seen from the fact that the array representation of $\phi$ is unbiased with respect to the columns corresponding to the $t$ designated inputs and $J$. Thus we have
\begin{eqnarray}
\nonumber \Pr [ \mathbf{Y_J}= \vv] &=& \sum _{\uu} \left( \Pr [\mathcal{X} = \uu] \times \frac{1}{v^{s-t}} \right) \\
\nonumber &=& \frac{1}{v^{s-t}} \sum _{\uu} \Pr [\mathcal{X} = \uu] \\
\label{r1.eq}&=& \frac{1}{v^{s-t}}.
\end{eqnarray}
Now, we compute the joint probability distribution on $\mathcal{X} \times \mathbf{Y_J}$. 
As noted above, $\uu$ and $\vv$ uniquely determine $\rr$.  Therefore it is immediate that
\begin{eqnarray}\label{r2.eq}
\Pr [\mathcal{X} = \uu, \mathbf{Y_J}= \vv] 
&=&  \Pr [\mathcal{X} = \uu] \times \frac{1}{v^{s-t}} .
\end{eqnarray}
Finally, from (\ref{r1.eq}) and (\ref{r2.eq}),
we see that (\ref{perfectr.eq}) holds.
\end{proof}

\section{Summary}

We have proven that the combinatorial definition of an AONT provides perfect security only for 
an equiprobable distribution of the input $s$-tuples. In the case where we do not have an equiprobable input distribution, we could instead consider  the \emph{mutual information}
$I(\mathcal{X}  ; \mathcal{Y} ) =  \mathsf{H}(\mathcal{X}) - \mathsf{H}( \mathcal{X}  \mid  \mathcal{Y} ) $ for all relevant $\mathcal{X}$  and $\mathcal{Y}$.  It would be of interest to prove an upper bound on $I(\mathcal{X}  ; \mathcal{Y} )$, which would presumably depend on 
$\mathsf{H}(\mathcal{X})$.

\section*{Acknowledgements}

The authors would like thank Ian Goldberg for raising the issues we discuss in this paper.

\end{document}